
 \documentclass[12pt]{amsart}

\usepackage{amsmath,graphics,comment,color}
\usepackage[dvips]{graphicx}
\usepackage{amsfonts,amssymb}

  \includecomment{inlong} \excludecomment{inshort}
 \includecomment{inlow} \excludecomment{inhigh}
\usepackage{etoolbox}
\usepackage[dvips]{graphicx}
\usepackage{hyperref,color,mathdots,accents,extarrows,bigints,enumitem,stmaryrd}
\usepackage[all]{xypic}
\usepackage[ansinew]{inputenc}
\usepackage{amsmath,amsfonts,amssymb}
\usepackage[all,graph]{xy}
\usepackage[shortcuts]{extdash}

\theoremstyle{plain}
\newtheorem*{theorem*}{Theorem}
\newtheorem*{lemma*} {Lemma}
\newtheorem*{corollary*} {Corollary}
\newtheorem*{proposition*} {Proposition}
\newtheorem{theorem}{Theorem}[section]
\newtheorem{lemma}[theorem]{Lemma}
\newtheorem{corollary}[theorem]{Corollary}
\newtheorem{proposition}[theorem]{Proposition}

\theoremstyle{remark}
\newtheorem*{remark}{Remark}
\newtheorem*{definition}{Definition}

\theoremstyle{definition}

\textwidth 6in    

\oddsidemargin.25in    
\evensidemargin.25in     
\marginparwidth=.85in

\def \Z {\mathbf{Z}}
\def \C {\mathbf{C}}

\newcommand{\smfrac}[2]{\mbox{\footnotesize$\displaystyle\frac{#1}{#2}$}} 
\newcommand{\tmfrac}[2]{\mbox{\large$\frac{#1}{#2}$}}

\def\Z{\Bbb{Z}}
\def\C{\Bbb{C}}

\def\part{\partial}

\def\a{\alpha}

\def\bp{\begin{pmatrix}}

\def\sm{\setminus}
\def\ep{\end{pmatrix}}
\def\bn{\begin{enumerate}}

\def\en{\end{enumerate}}
\def\ba{\begin{array}}
\def\ea{\end{array}}

\def\a{\alpha}

\def\ti{\widetilde}

\def\fr12{\frac{1}{2}}


\def\Char{\mbox{char}}

\def\cmtbf#1{} \def\cmt#1{}

\begin{document}

\title{An elementary proof of the group law for elliptic curves}

\author{Stefan Friedl}
\address{Fakult\"at f\"ur Mathematik\\ Universit\"at Regensburg\\93040 Regensburg\\   Germany}
\email{sfriedl@gmail.com}

\date{\today}
\begin{abstract}
We give an elementary proof of the group law for elliptic curves using explicit formulas.
 \end{abstract}
\maketitle


\section{Introduction}

In this short note we give an elementary proof of the well--known fact that the addition of points on an elliptic curve defines a group structure. We only use explicit and very well--known formulas for the coordinates of the addition of two points.
Even though the arguments in the proof are elementary, making this approach work requires several intricate arguments and elaborate computer calculations. The approach of this note was used by Laurent Th\'ery  \cite{Th07} to give a formal proof of the group law for elliptic curves in the formal proof management system Coq.
Some of the ideas of our approach were also  used by Thomas Hales~\cite{Ha16} to give an  elementary computational proof of the
group
law for Edwards elliptic curves.

In the following $K$ will denote an algebraically closed field with $\Char(K)>3$.
An elliptic curve is defined as  a pair $(E,O)$ where $E$ is a smooth algebraic
curve of genus one  and $O\in E$ a point. The following proposition tells us in particular that the precise definition of an elliptic curve is irrelevant to us since we describe any elliptic curve in an elementary way.

\begin{proposition}\cite[Prop.~3.1]{Si86}
Let $E$ be an elliptic curve.
Then there exist $a,b\in K$ with $4a^3+27b^2\ne 0$ and an isomorphism
of curves
\[ \phi\colon E \,\,\to\,\, E_{a,b}:=\{ [x:y:z]\in \mathbb{P}(K)^2 \,|\, zy^2=x^3+axz^2+bz^3 \} \]
such that $\phi(O)=[0:1:0]$. Conversely, for any $a,b\in K$ with $4a^3+27b^2\ne 0$
the variety $(E_{a,b},[0:1:0])$ is an elliptic curve.
\end{proposition}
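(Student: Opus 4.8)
The natural tool here is the Riemann--Roch theorem applied to the genus--one curve $E$. For a divisor $D$ on $E$ write $L(D)$ for the associated Riemann--Roch space. Since $E$ has genus one, Riemann--Roch gives $\dim_K L(n\cdot O)=n$ for every $n\geq 1$, because $\deg(n\cdot O)=n>0=2g-2$ and the canonical class has degree zero. The plan is to build the Weierstrass coordinates from these spaces: the space $L(2\cdot O)$ is two--dimensional and, beyond the constants, contains a function $x$ with a pole of exact order $2$ at $O$; the space $L(3\cdot O)$ is three--dimensional and contains in addition a function $y$ with a pole of exact order $3$ at $O$.

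First I would record the key numerology. All seven functions
\[ 1,\ x,\ y,\ x^2,\ xy,\ x^3,\ y^2 \]
lie in the six--dimensional space $L(6\cdot O)$, since $x^3$ and $y^2$ each have a pole of order exactly $6$ at $O$ while the others have lower order. Hence these seven functions are linearly dependent, and the resulting relation must involve both $x^3$ and $y^2$ with nonzero coefficients, as otherwise one obtains a contradiction on pole orders at $O$. After rescaling $x$ and $y$ to normalise these coefficients and, using $\Char(K)>3$, completing the square in $y$ and the cube in $x$, the relation takes the form $y^2=x^3+ax+b$ for suitable $a,b\in K$. This is precisely the affine equation cutting out $E_{a,b}$, with $O$ mapping to the point at infinity $[0:1:0]$.

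Next I would upgrade the morphism $\phi=[x:y:1]\colon E\to E_{a,b}$ to an isomorphism, and this is where I expect the main obstacle to lie: one must show $\phi$ has degree one and that the target is smooth. For degree one, I would use that $x\colon E\to\mathbb{P}^1$ has degree $2$, its only pole being the double pole at $O$, while $y$ has degree $3$; thus $[K(E):K(x)]=2$ and $[K(E):K(y)]=3$, so the subfield $K(x,y)\subseteq K(E)$ has index dividing both $2$ and $3$ and therefore equals $K(E)$, making $\phi$ birational onto its image. A birational morphism from a smooth projective curve to a projective curve is an isomorphism onto its image once the image is known to be smooth, so the crux is to prove $E_{a,b}$ smooth and to deduce $4a^3+27b^2\neq 0$. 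Smoothness at the finite points is equivalent to the cubic $x^3+ax+b$ having no repeated root, i.e.\ to the nonvanishing of its discriminant, which up to a nonzero constant is $4a^3+27b^2$; smoothness at $[0:1:0]$ is a direct local check. Since $E$ is smooth and $\phi$ is an isomorphism onto the image, $E_{a,b}$ is smooth, forcing $4a^3+27b^2\neq 0$.

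For the converse, given $a,b$ with $4a^3+27b^2\neq 0$, the same discriminant computation shows $E_{a,b}$ is a smooth plane cubic, and its genus is then computed by the degree--genus formula $(d-1)(d-2)/2$ with $d=3$, giving genus one; taking $O=[0:1:0]$ exhibits $(E_{a,b},[0:1:0])$ as an elliptic curve. I expect the smoothness and discriminant bookkeeping, together with the justification that a birational morphism to a smooth target is an isomorphism, to be the technical heart, while all the pole--order and Riemann--Roch dimension counts become routine once $\Char(K)>3$ is used to complete the square and the cube.
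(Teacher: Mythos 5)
The paper itself gives no proof of this proposition; it simply cites Silverman's Proposition III.3.1, whose proof is exactly the Riemann--Roch argument you outline. So your overall route---choosing $x\in L(2\cdot O)$ and $y\in L(3\cdot O)$, extracting a linear relation among the seven functions in the six-dimensional space $L(6\cdot O)$, normalising using $\Char(K)>3$, and using $[K(E):K(x)]=2$, $[K(E):K(y)]=3$ to conclude $K(x,y)=K(E)$---is the standard one and matches the cited source. Those parts of your sketch are correct.

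There is, however, a genuine gap in your smoothness step: the argument as written is circular. You correctly state that a birational morphism from a smooth projective curve is an isomorphism onto its image \emph{once the image is known to be smooth}, and you correctly identify proving smoothness of $E_{a,b}$ as the crux; but you then deduce that smoothness from ``$\phi$ is an isomorphism onto the image,'' which is precisely the conclusion you had postponed until smoothness was established. Moreover, the weaker claim implicit in your deduction---that the image of a smooth projective curve under a birational morphism is smooth---is simply false: the normalisation map $\mathbb{P}^1\to C$, where $C$ is the nodal cubic $zy^2=x^3+x^2z$, is a birational morphism from a smooth projective curve onto a singular Weierstrass cubic. The missing idea is birational invariance of the genus. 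If $4a^3+27b^2=0$, then the plane cubic $E_{a,b}$ is singular, hence rational (projection from its unique singular point parametrises it), so its function field is $K(t)$; since you have already shown that $K(E)=K(x,y)$ is the function field of $E_{a,b}$, this would make $E$ birational to $\mathbb{P}^1$, contradicting that $E$ has genus one. This is what forces $4a^3+27b^2\neq 0$, hence smoothness of $E_{a,b}$, and only then does your fact about birational morphisms between smooth projective curves apply to make $\phi$ an isomorphism. The remainder of your sketch (the pole-order numerology, the degree-one argument, and the converse via the degree--genus formula) is sound.
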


\begin{corollary}
Let $E$ be an elliptic curve.
Then there exist $a,b\in K$ with $4a^3+27b^2\ne 0$ and a bijection
\[ \phi\colon  E \to E_{a,b}^{\operatorname{affine}}:=\{ (x,y)\in K^2 | y^2=x^3+ax+b \} \cup \{O\} \]
such that $\phi(O)=O$.
\end{corollary}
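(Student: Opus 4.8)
The plan is to deduce the corollary directly from the proposition by passing from the projective model to its affine chart. The proposition already gives us the data we need: constants $a,b\in K$ with $4a^3+27b^2\ne 0$ and an isomorphism $\phi\colon E\to E_{a,b}$ of projective curves carrying $O$ to the point at infinity $[0:1:0]$. So the only real content is to exhibit a bijection between the affine set $E_{a,b}^{\operatorname{affine}}$ and the projective curve $E_{a,b}$, and then compose with $\phi$.

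First I would set up the standard dehomogenization. A point $[x:y:z]\in E_{a,b}$ satisfies $zy^2=x^3+axz^2+bz^3$. I would split into the two cases $z\ne 0$ and $z=0$. When $z\ne 0$ I would normalize to $z=1$ and send $[x:y:1]\mapsto (x,y)\in K^2$; the defining equation becomes $y^2=x^3+ax+b$, so the image lands in the affine curve, and this is a bijection onto the locus $\{z\ne 0\}$. When $z=0$ the equation forces $x^3=0$, hence $x=0$, so $[0:y:0]=[0:1:0]$ is the unique point at infinity, which I would match with the formal symbol $O$ adjoined to the affine set. This gives a bijection $\psi\colon E_{a,b}\to E_{a,b}^{\operatorname{affine}}$ sending $[0:1:0]$ to $O$.

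I would then define the desired map as the composite $\psi\circ\phi\colon E\to E_{a,b}^{\operatorname{affine}}$. Since $\phi$ is an isomorphism of curves it is in particular a bijection, and $\psi$ is a bijection, so the composite is a bijection. Tracking the basepoint, $\phi(O)=[0:1:0]$ and $\psi([0:1:0])=O$, so the composite sends $O\mapsto O$, as required. The same $a,b$ furnished by the proposition satisfy $4a^3+27b^2\ne 0$, completing the statement.

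I do not expect any genuine obstacle here; the corollary is essentially a restatement of the proposition in affine coordinates, and the only point demanding any care is the bookkeeping at infinity — verifying that the projective curve meets the line $z=0$ in exactly the single point $[0:1:0]$ (a consequence of $x^3=0$ forcing $x=0$), so that the adjoined symbol $O$ accounts for precisely one point and the count matches. Everything else is the routine passage between a projective variety and its principal affine chart.
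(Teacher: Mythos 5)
Your proof is correct and is exactly the argument the paper leaves implicit: the corollary is stated without proof as an immediate consequence of the proposition, via the standard identification of the projective curve with its affine chart plus the unique point at infinity $[0:1:0]$ (which, as you note, is the only point with $z=0$ since the equation then forces $x^3=0$). Nothing is missing; your write-up simply supplies the routine bookkeeping the paper takes for granted.
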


In the following we will take the affine point of view, i.e.\ an
elliptic curve $E$ will mean the set $E_{a,b}^{\operatorname{affine}}$ for some $a,b\in K$ with $4a^3+27b^2\ne 0$.
The point $O$ is called the ``point at infinity''.
For points $A,B,C,\dots \in E\sm \{O\}$ we will write
$A=(x_A,y_A), B=(x_B,y_B), C=(x_C,y_C),\dots$ for the coordinates.

\begin{definition}
We define $+\colon E\times E\to E, (A,B)\mapsto A+B$ as follows.
\bn
\item[(a)]
We set $A+O=O+A:=O$ for all $A$.
\item[(b)]
If $(x_A,y_A)=(x_B,-y_B)$, then $A+B:=O$.
\item[(c)] If $(x_A,y_A)\ne (x_B,-y_B)$, then we define $A+B:=(x_{AB},y_{AB})$ where
\begin{eqnarray} \label{addpq}
      x_{AB} & := & \a(A,B)^2-x_A-x_B \nonumber \\[-6.5pt]
      &&\\[-6.5pt]
\vspace*{-2cm} y_{AB} & := & -y_A+\a(A,B)(x_A-x_{AB}) \nonumber
\end{eqnarray}
with $\a(A,B)=\tmfrac{y_A-y_B}{x_A-x_B}$ if $x_A\ne x_B$
and $\a(A,B)=\tmfrac{3x_A^2+a}{2 y_B}$ if $x_A=x_B$.
\en
\end{definition}

\begin{remark}
This  definition of the ``addition'' $+$ can be described geometrically. More precisely,
if $A,B \ne O$ are two points on $E$ with  $A\ne B$, then we first consider the line $g$ through $A$ and $B$, it intersects the curve $E$ in a third point $C$ and we define $A+B:=-C$ where $-C$ is the reflection of $C$ about the $x$-axis.
Similarly, if $A\ne O$ is a point on $E$, then $A+A$ is defined by considering the tangent $h$ to $E$ at the point $A$, it intersects the curve $E$ in a third point $C$ and we define $A+A:=-C$.
\begin{figure}[h]
\begin{center}
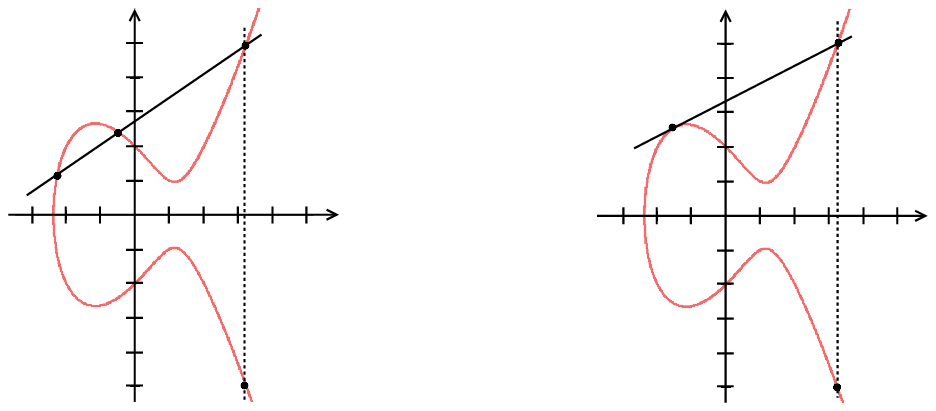
\label{fig:elliptic-curve-addition}
\end{center}
\end{figure}
\end{remark}

The following theorem is the main result of this paper.

\begin{theorem}
$(E,+)$ is an abelian group
with neutral element $O$.
\end{theorem}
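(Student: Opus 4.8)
The plan is to check the group axioms one at a time straight from the addition formulas, spending almost all the effort on associativity. First I would establish \emph{closure}: whenever $A+B$ is produced by case (c), the point $(x_{AB},y_{AB})$ must actually satisfy $y^2=x^3+ax+b$. Substituting the formulas for $x_{AB}$ and $y_{AB}$ into the curve equation reduces this to a polynomial identity, which I would verify separately in the chord subcase $x_A\ne x_B$ and the tangent subcase $x_A=x_B$, in each case simplifying with the relations $y_A^2=x_A^3+ax_A+b$ and $y_B^2=x_B^3+ax_B+b$.

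The neutral element and inverses are then immediate: case (a) records the neutral-element property for $O$, and setting $-O:=O$ and $-(x_A,y_A):=(x_A,-y_A)$ (which lies on $E$ since $(-y_A)^2=y_A^2$), case (b) gives $A+(-A)=(-A)+A=O$. For \emph{commutativity} I would note that $\alpha(A,B)$ is symmetric in $A$ and $B$ (visibly so when $x_A\ne x_B$, and when $x_A=x_B$ the relevant instance is $A=B$), that $x_{AB}=\alpha(A,B)^2-x_A-x_B$ is manifestly symmetric, and that the symmetry of $y_{AB}$ follows from the one-line identity $-y_A+\alpha(x_A-x_{AB})=-y_B+\alpha(x_B-x_{AB})$, which rearranges to $\alpha(x_A-x_B)=y_A-y_B$, i.e.\ the defining relation for $\alpha$ in the chord case.

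The real work is \emph{associativity}, $(A+B)+C=A+(B+C)$. After using the neutral-element axiom to dispose of the cases where one of $A,B,C$ equals $O$, I would first treat the \emph{generic configuration}: all three points distinct from $O$, both inner sums $A+B$ and $B+C$ different from $O$, and every $x$-coordinate difference entering a slope nonzero. Applying case (c) twice and clearing denominators turns each side into a rational expression in $a,b,x_A,y_A,\dots,x_C,y_C$, and associativity in this regime amounts to a single polynomial identity that vanishes modulo the ideal generated by the three curve relations. This identity is enormous, so I expect to confirm it with computer algebra; reducing the degrees using the curve relations and exploiting the already-proven commutativity to cut down the symbolic load will be essential.

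The remaining difficulty, and the one I expect to be the main obstacle, is the \emph{degenerate configurations}: when $B=-A$ or $C=-B$ so an inner sum is $O$, when two of the points coincide and a tangent slope is used, or when an $x$-coordinate difference vanishes. Each must be handled by a separate, smaller computation and checked for consistency with the limiting behaviour of the generic formula. Several reduce to an already-established case — for instance $B=-A$ forces both sides to equal $C$ via the cancellation identity $A+(-A+C)=C$, and $C=-B$ is symmetric to this — and commutativity again halves much of the casework. The principal risk is not any one computation but the combinatorial proliferation of these special cases; organizing them so that the case analysis stays finite and each reduces to the generic identity or to a short direct check is precisely the intricate bookkeeping the introduction alludes to.
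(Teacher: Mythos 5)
Your outline---verify closure, read off the neutral element, inverses and commutativity from the formulas, then settle associativity by a computer-verified polynomial identity in the generic configuration plus case analysis in the degenerate ones---is the same as the paper's, and your generic configuration coincides exactly with the hypotheses of the paper's Lemma~\ref{claimabc}. The gap is in your claim that every degenerate case ``reduces to the generic identity or to a short direct check.'' One case does neither: $C=A+B$ (and its mirror image $A=B+C$), i.e.\ the configuration in which the outer sum $(A+B)+C=(A+B)+(A+B)$ is the doubling of a point that is itself a sum. This case is excluded from the generic identity (the slope between $A+B$ and $C$ has vanishing denominator, so the tangent formula must be used), it is not covered by the reductions you list ($B=-A$, or two of $A,B,C$ coinciding), and it is emphatically not a ``smaller computation'': writing $(A+B)+(A+B)=A+(B+(A+B))$ in the coordinates of $A$ and $B$ nests the addition formulas three deep and produces polynomials far larger than those of the generic case. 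This is precisely the case the paper isolates after Lemma~\ref{claimaaaa} and deliberately refuses to attack by computation.

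The paper closes this case by a structural argument for which your proposal has no substitute. It first proves, by small computations and casework, the uniqueness of the neutral element ($A+B=A\Rightarrow B=O$, Lemma~\ref{claimnull}), the implication $A+B=-A\Rightarrow B=-A-A$ (Lemma~\ref{claim5}), the cancellation rule $A+B=A+\widetilde B\Rightarrow B=\widetilde B$ (Lemma~\ref{claim3}), and the identity $(A+B)-B=A$ (Lemma~\ref{claim4}). Then, for $C=A+B$ with $(A+B)+(A+B)\neq -A$, it computes $((A+B)+(A+B))-A=(A+B)+((A+B)-A)=(A+B)+B=(A+(B+(A+B)))-A$ using only already-established instances of associativity, and concludes by cancellation; the remaining subcase $(A+B)+(A+B)=-A$ is dispatched by Lemma~\ref{claim5}. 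Without this trick (or a vastly larger computation than you have budgeted for) your case analysis does not close, so as written the proposal is incomplete. Two smaller points: the identity $A+(-A+C)=C$ that you invoke for the case $B=-A$ is itself the nontrivial Lemma~\ref{claim4} and requires proof; and ``checked for consistency with the limiting behaviour of the generic formula'' has no meaning over an arbitrary algebraically closed field of characteristic $>3$---there are no limits---so each degenerate case needs a genuinely algebraic argument of its own.
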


This theorem is of course well known. For example in \cite{Si86} it is shown that the above
structure is isomorphic to the group structure of $\mbox{Pic}^0(E)$, the Picard group of $E$. In particular
$(E,+)$ forms a group. A more geometric argument for the statemant that $(E,+)$ defines a group
is given in \cite[p. 87]{Ku95} or \cite{Hu87}.
Perhaps the most elementary proof can be found for $K=\C$ using the Weierstrass function (cf. \cite{La78}).
By the Lefschetz principle this shows the theorem for any algebraically closed field of characteristic zero.

We will give a completely elementary proof, just using the above explicit definition
of the group structure through formulas.
It must have always been  clear that such a proof exists, but it turns out that
this direct proof is more difficult than one might have imagined initially.
Many special cases have to be dealt with separately and some are non--trivial.
Furthermore it turns out that the explicit computations in the proof are very hard.
The verification of some identities took several hours on a modern computer; this proof
could not have been carried out before the 1980's.\\

\emph{Added in proof. Here by a ``modern computer'' I mean a PC from 1998 with 16Mb RAM. }

\subsection*{Note}
This elementary proof was part of my undergraduate thesis written in 1998
at the University of Regensburg written under the supervision of Ernst Kunz. Several years ago I posted this extract from my thesis on my webpage. Slightly to my own surprise (and surely thanks to google) it has been cited on several occasions~\cite{Th07,TH07,Ha16,Ru,Ru17}.

\subsection*{Acknowledgment.} 
I am grateful to Martin Kreuzer and Armin R\"ohrl for helping me with the computer calculations. 
I am especially grateful to Ernst Kunz for suggesting this problem to me. I also would like to use this opportunity to thank Ernst Kunz for his  support in the early stages of my mathematical career. Without his help I would never have become a mathematician.

\section{Proof of the associativity law for elliptic curves}
In the following let  $E$ be a fixed elliptic curve.
It is clear that ``+'' is commutative, that $O$ is a neutral element and that
the inverse element for $A=(x_A,y_A)$ is given by $-A:=(x_A,-y_A)$.
The only difficult part is to show that ``+'' is in fact associative.
This proof will require the remainder of this paper.
\\

Throughout this section we will use the following facts which follow immediately from the definition.
\bn
\item For  $A=(x_A,y_A) \in E \setminus \{O\}$ we have  $A+A=O$ if and only if  $y=0$.
\item If $A,B \in E \sm \{O\}$ and $x_A=x_B$, then $A=B$ or $A=-B$.
\en

Except for three special cases the operation ``+'' is given by
Formula  (\ref{addpq}).
In Section~\ref{section31} we will show the associativity in three out of four cases in which
addition is given by either of the two formulas.
This will be done using explicit calculations.

In Section ~\ref{sectionsp} we will prove several lemmas, which we will use in Section
\ref{sectionproof} to give the proof in the general case.

\subsection{Proof for the generic cases} \label{section31}

In this section we consider the cases in which only Equation (\ref{addpq}) is being
used in the definitions of $(A+B)+C$ and $A+(B+C)$.

\begin{lemma} \label{claimabc}
Let $A,B,C \in E \setminus \{O\}$.
If     $A \neq \pm B$, $B \neq \pm C$, $A+B \neq \pm C$
and $B+C \neq \pm A$,
then
\[ (A+B)+C=A+(B+C). \]
\end{lemma}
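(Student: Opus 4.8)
The plan is to verify the identity $(A+B)+C=A+(B+C)$ by a direct but carefully organized computation using only Formula~(\ref{addpq}). The hypotheses $A\neq\pm B$ and $B\neq\pm C$ guarantee that the inner sums $A+B$ and $B+C$ are each formed using the ``secant'' branch $\a(A,B)=\tmfrac{y_A-y_B}{x_A-x_B}$ (since $x_A\neq x_B$ and $x_B\neq x_C$), and the hypotheses $A+B\neq\pm C$ and $B+C\neq\pm A$ ensure that the two outer sums are likewise generic, so throughout the computation every slope is a secant slope and no denominator from the doubling branch appears. Thus I work entirely in the field of rational functions in the six coordinates, subject to the two curve relations $y_A^2=x_A^3+ax_A+b$, etc.

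\emph{First} I would write down $P:=(A+B)+C$ purely in terms of $x_A,y_A,\dots,x_C,y_C$. Set $\a_1=\tmfrac{y_A-y_B}{x_A-x_B}$, so that $x_{AB}=\a_1^2-x_A-x_B$ and $y_{AB}=-y_A+\a_1(x_A-x_{AB})$; then set $\a_2=\tmfrac{y_{AB}-y_C}{x_{AB}-x_C}$ and obtain $x_P=\a_2^2-x_{AB}-x_C$ and the corresponding $y_P$. \emph{Symmetrically}, I would compute $Q:=A+(B+C)$, forming first $x_{BC},y_{BC}$ from $\b_1=\tmfrac{y_B-y_C}{x_B-x_C}$ and then $x_Q,y_Q$ from $\b_2=\tmfrac{y_{BC}-y_A}{x_{BC}-x_A}$. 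The goal is the pair of identities $x_P=x_Q$ and $y_P=y_Q$.

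\emph{The main obstacle} is that clearing denominators in these nested expressions produces enormous polynomials, so the computation is infeasible by hand and is exactly where the ``elaborate computer calculations'' mentioned in the introduction enter; one clears the denominators $x_A-x_B$, $x_B-x_C$, $x_{AB}-x_C$, $x_{BC}-x_A$ (all nonzero by hypothesis, the latter two because $A+B\neq\pm C$ and $B+C\neq\pm A$ force $x_{AB}\neq x_C$ and $x_{BC}\neq x_A$) and checks that the resulting polynomial identity holds modulo the ideal generated by the three curve relations $y_A^2-(x_A^3+ax_A+b)$, $y_B^2-(\cdots)$, $y_C^2-(\cdots)$. Concretely, I would reduce the difference of the two sides by this ideal—using the relations to eliminate all powers $y_A^2,y_B^2,y_C^2$ and higher—and confirm it reduces to zero; a Gr\"obner basis or repeated substitution of the curve equations accomplishes this. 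A useful organizational simplification is to verify $x_P=x_Q$ first, as the $x$-coordinate identity is lower degree and, once established, the $y$-coordinate identity $y_P=y_Q$ follows from a shorter manipulation of $y_P=-y_{AB}+\a_2(x_{AB}-x_P)$ against its counterpart. The role of the four hypotheses is purely to keep us in the generic branch and to certify that no denominator vanishes; granting that, associativity here is a single (large) polynomial identity in the coordinate ring of $E\times E\times E$.
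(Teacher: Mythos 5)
Your proposal is correct and follows essentially the same route as the paper: both sides are written out via the secant-slope formulas (the hypotheses ruling out the doubling branch and any vanishing denominators), denominators are cleared, and the resulting polynomial identities for the $x$- and $y$-coordinates are verified to lie in the ideal generated by the three curve relations $y_A^2-x_A^3-ax_A-b$, etc., using a computer algebra system (the paper uses CoCoA). The only difference is organizational: the paper writes out the cleared-denominator identities explicitly before handing them to the machine, while you defer that bookkeeping, but the argument is the same.
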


\begin{proof}
Write $(x_1,y_1):=(A+B)+C$ and $(x_2,y_2):=A+(B+C)$.
Let
\[ \ba{rclrcl}
\alpha&:=&\tmfrac{y_B-y_A}{x_B-x_A}, &\beta&:=&\tmfrac{y_A+y_C-\alpha(2x_A+x_B-\alpha^2)}{x_A+x_B+x_C-\alpha^2}, \\
\gamma&:=&\tmfrac{y_B-y_C}{x_B-x_C}, &\tau&:=&\tmfrac{y_A+y_B-\gamma(2x_B+x_C-\gamma^2)}{x_A+x_B+x_C-\gamma^2}. \ea
\]
Using Equation (\ref{addpq})  we get
\[
\ba{rclrcl}
 x_1&=&\beta^2+x_A+x_B-x_C-\alpha^2, &  y_1&=&-y_C+\beta(2x_C-x_A-x_B-\beta^2+\alpha^2), \\
 x_2&=&\tau^2+x_B+x_C-x_A-\gamma^2, & y_2&=&-y_A+\tau(2x_A-x_B-x_C-\tau^2+\gamma^2). \ea \]
Setting
\[
\ba{rclrcl}
\widetilde\alpha&:=&y_B-x_A, &\widetilde\beta&:=&(y_A+y_C)(x_B-x_A)^3-\widetilde\alpha ((2x_A+x_B)(x_B-x_A)^2-\widetilde\alpha^2), \\
\widetilde\gamma&:=&y_B-y_C, &\widetilde\tau&:=&(y_A+y_B)(x_B-x_C)^3-        \widetilde\gamma((2x_B+x_C)(x_B-x_C)^2-\widetilde\gamma^2),\\
\widetilde\eta&:=&x_B-x_A, &\widetilde\mu&:=&x_B-x_C.
\ea \]
one can show that $x_1=x_2$ is equivalent to
\vspace{0.4cm}\\
\hspace*{0.5cm}$(\widetilde\beta^2(x_B-x_C)^2+(((2x_A-2x_C)(x_B-x_C)^2
    +\widetilde\gamma^2)(x_B-x_A)^2-\widetilde\alpha^2(x_B-x_C)^2)$\\[0.1cm]
\vspace{0.1cm}
\hspace*{0.5cm}$((x_A+x_B+x_C)(x_B-x_A)^2-\widetilde\alpha^2)^2)((x_A+x_B+x_C)
     (x_B-x_A)^2-\widetilde\gamma^2)^2$\\[0.1cm]
\vspace{0.1cm}
$-\widetilde\tau^2((x_A+x_B+x_C)(x_B-x_A)^2-\widetilde\alpha^2)^2(x_B-x_A)^2=0$\\[0.3cm]
and $y_1=y_2$ is equivalent to
\vspace{0.4cm}\\
$\hspace*{0.5cm}(y_A-y_C)((x_A+x_B+x_C)\widetilde\eta^2-\widetilde\alpha^2)^3
((x_A+x_B+x_C)\widetilde\mu^2-\widetilde\gamma^2)^3\widetilde\eta^3\widetilde\mu^3$\\[0.1cm]
\vspace*{0.1cm}
$+\widetilde\beta(((2x_C-x_A-x_B)\widetilde\eta^2+\widetilde\alpha^2)((x_A+x_B+x_C)
 \widetilde\eta^2-\widetilde\eta^2)^2-\widetilde\beta^2)$\\
\vspace{0.1cm}
\hspace*{1cm}$((x_A+x_B+x_C)\widetilde\mu^2-\widetilde\gamma^2)^3\widetilde\mu^3$\\
\vspace{0.1cm}
$-\widetilde\tau(((2x_A-x_B-x_C)\widetilde\mu^2+\widetilde\gamma^2)
  ((x_A+x_B+x_C)\widetilde\eta^2-\widetilde\gamma^2)^2-\widetilde\tau^2)$\\
\vspace{0.1cm}
\hspace*{1cm}$((x_A+x_B+x_C)\widetilde\eta^2-\widetilde\alpha^2)^3\widetilde\eta^3=0.$\\[0.3cm]
By abuse of notation we now consider the equations
over the polynomial ring 
\[ P\,:=\,\Z[x_A,x_B,x_C,y_A,y_B,y_C,a,b].\]
It suffices to show that the equalities hold in $P/I$
where 
\[ I\,:=\,(y_A^2-x_A^3-ax_A-b,y_B^2-x_B^3-ax_B-b, y_C^2-x_C^3-ax_C-b).\]
This is equivalent to showing that both left hand sides lie in $I$.
This was shown using
the commutative algebra package `CoCoA' \cite{cocoa}.
\end{proof}

In a very similar way one can show the following two lemmas.

\begin{lemma} \label{claimaac}
If $A,B \ne O, A \neq -A, A \neq \pm B, A+A\not=\pm B$ and
$A+B\not=\pm A$, then
 \[ (A+A)+B=A+(A+B). \]
\end{lemma}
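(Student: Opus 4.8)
The statement to prove is Lemma~\ref{claimaac}: under the hypotheses $A,B \ne O$, $A \neq -A$, $A \neq \pm B$, $A+A \neq \pm B$ and $A+B \neq \pm A$, we have $(A+A)+B = A+(A+B)$.

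\bigskip

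The plan is to mirror exactly the computational strategy already deployed for Lemma~\ref{claimabc}, specializing the three-point associativity argument to the case where the first two summands coincide. First I would observe that the hypotheses are precisely what is needed to guarantee that every one of the four additions involved, namely $A+A$, $A+B$, $(A+A)+B$ and $A+(A+B)$, is computed by the ``generic'' Formula~(\ref{addpq}) rather than by clauses (a) or (b) of the definition. Specifically, $A \neq -A$ forces $y_A \neq 0$, so $A+A$ is the genuine tangent-line doubling using the slope $\tmfrac{3x_A^2+a}{2y_A}$; $A \neq \pm B$ guarantees $A+B$ uses the secant slope $\tmfrac{y_A-y_B}{x_A-x_B}$; and the conditions $A+A \neq \pm B$, $A+B \neq \pm A$ ensure the two outer additions are likewise generic and in particular well defined (neither summand is $O$ nor the negative of the other).

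\bigskip

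Next I would write $(x_1,y_1) := (A+A)+B$ and $(x_2,y_2) := A+(A+B)$ and expand both sides through two applications of Formula~(\ref{addpq}), introducing an auxiliary slope variable for each intermediate addition exactly as $\alpha,\beta,\gamma,\tau$ were introduced in the proof of Lemma~\ref{claimabc}. The only structural change is that the slope governing the $A+A$ step is now the tangent slope $\tmfrac{3x_A^2+a}{2y_A}$ in place of a secant slope; note that the curve parameter $a$ enters the intermediate expression at this point, which is why the polynomial ring $P$ and the ideal $I$ already carry $a,b$ as variables. After clearing denominators (multiplying through by suitable powers of the differences $x_A-x_B$ and of $2y_A$), I would reduce the asserted identities $x_1 = x_2$ and $y_1 = y_2$ to the claim that two explicit polynomials in $P := \Z[x_A,x_B,x_C,y_A,y_B,y_C,a,b]$ lie in the ideal
\[ I := (y_A^2 - x_A^3 - a x_A - b,\; y_B^2 - x_B^3 - a x_B - b,\; y_C^2 - x_C^3 - a x_C - b). \]
Membership in $I$ is equivalent to the desired equalities holding in $P/I$, and this ideal-membership is verified by a Gröbner basis computation in CoCoA, just as for Lemma~\ref{claimabc}.

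\bigskip

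The main obstacle I anticipate is purely computational rather than conceptual: the specialization $B \mapsto A$ inside a tangent-doubling formula produces a more intricate rational expression than the fully generic three-variable case, because the slope $\tmfrac{3x_A^2+a}{2y_A}$ is not a difference quotient and so does not simplify in the same way under the secant manipulations. In particular the polynomials whose ideal-membership must be checked are of high degree, and the correct choice of clearing factors, so that the cleared identity is genuinely a consequence of the three defining relations and not merely of a larger ideal, requires care; an incautious multiplication by a factor that vanishes under the hypotheses (for instance a spurious power of $x_A-x_B$ or of $y_A$) could either invalidate the reduction or inflate the computation beyond what CoCoA can handle. Once the bookkeeping of denominators is handled correctly, however, the verification is a finite check of the same type already completed, and no new geometric input is needed.
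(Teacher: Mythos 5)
Your proposal matches the paper's approach: the paper proves this lemma by exactly the same method as Lemma~\ref{claimabc} (explicit expansion via Formula~(\ref{addpq}) with the tangent slope for $A+A$, clearing denominators, and verifying ideal membership modulo the curve relations with CoCoA), which is all the paper says --- ``In a very similar way one can show the following two lemmas.'' Your reduction and the role of the hypotheses are correctly identified, so the proposal is correct and essentially identical to the paper's proof.
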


\begin{lemma} \label{claimaaaa}
If $A \ne O, A \neq -A$, $A+A \neq -(A+A)$, $(A+A)+A \neq \pm A$
and $A+A \neq \pm A$, then
\[ (A+A)+(A+A)=A+(A+(A+A)). \]
\end{lemma}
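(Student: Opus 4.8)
The plan is to follow verbatim the template of Lemmas~\ref{claimabc} and~\ref{claimaac}: expand both sides using the explicit formulas~(\ref{addpq}), clear denominators, and reduce the resulting polynomial identities modulo the curve relation, delegating the final ideal-membership check to CoCoA~\cite{cocoa}. The one structural simplification here is that every point involved is $A$, so the entire computation takes place in the ring $\Z[x_A,y_A,a,b]$ modulo the single relation $y_A^2-x_A^3-ax_A-b$, rather than modulo the three-generator ideal $I$ of Lemma~\ref{claimabc}.

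First I would set $D:=A+A$. Since $x_A=x_A$, this is the doubling branch, and with $\a:=\a(A,A)=\tmfrac{3x_A^2+a}{2y_A}$ (whose denominator is nonzero because $A\neq -A$ forces $y_A\neq 0$ by fact~(1)) we have $x_D=\a^2-2x_A$ and $y_D=-y_A+\a(x_A-x_D)$. I would record $x_D,y_D$ as rational functions of $x_A,y_A,a$ and substitute them into both sides of the claimed identity.

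For the left-hand side, $D+D$ is once more a doubling; it lands in branch~(c) of~(\ref{addpq}) exactly because the hypothesis $A+A\neq-(A+A)$ guarantees $y_D\neq 0$ (fact~(1) applied to $D$), so that $\a(D,D)=\tmfrac{3x_D^2+a}{2y_D}$ is well defined. For the right-hand side I would first form $F:=A+D=A+(A+A)$: the hypothesis $A+A\neq\pm A$ forces $D\neq\pm A$, hence $x_A\neq x_D$ by fact~(2) and $A\neq -D$, so this is the chord branch $\a(A,D)=\tmfrac{y_A-y_D}{x_A-x_D}$ and the sum is not $O$. Then $A+F$ computes $A+(A+(A+A))$, and the hypothesis $(A+A)+A\neq\pm A$, which by commutativity reads $F\neq\pm A$, again yields $x_A\neq x_F$ and places us in the chord branch. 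After these substitutions each coordinate of both sides is an explicit rational function of $x_A,y_A,a,b$; equating the $x$-coordinates and the $y$-coordinates and clearing the (nonvanishing) denominators turns the claim into the assertion that two explicit polynomials lie in $(y_A^2-x_A^3-ax_A-b)$, which, since the ideal has a single generator, just amounts to repeatedly replacing $y_A^2$ by $x_A^3+ax_A+b$.

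The obstacle I expect is entirely computational rather than conceptual: two nested doublings followed by two chord additions generate numerators of very high degree in $x_A$ and $y_A$, so the cleared-denominator polynomials whose ideal membership must be checked are large, and it is this reduction that consumes the machine time. The only point requiring genuine care is to verify, using each of the four hypotheses in turn, that every denominator that arises---namely $2y_A$, $2y_D$, $x_A-x_D$ and $x_A-x_F$---is nonzero, so that clearing denominators is legitimate and the correct branch of~(\ref{addpq}) is selected at each stage.
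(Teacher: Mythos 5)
Your proposal is correct and is exactly the proof the paper intends: the paper disposes of this lemma with the remark that it follows ``in a very similar way'' as Lemma~\ref{claimabc}, i.e.\ by expanding both sides via Equation~(\ref{addpq}), clearing denominators, and verifying ideal membership modulo the curve relation with CoCoA~\cite{cocoa}. Your added care in checking that each hypothesis forces the relevant denominators ($2y_A$, $2y_D$, $x_A-x_D$, $x_A-x_F$) to be nonzero and selects the correct branch of the addition formula is precisely the bookkeeping the paper leaves implicit, and your observation that everything reduces modulo the single relation $y_A^2-x_A^3-ax_A-b$ is a valid simplification of the three-generator ideal used in Lemma~\ref{claimabc}.
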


The next step would be to show that under the above restrictions, we have
\[(A+B)+(A+B)\,\,=\,\,A+(B+(A+B)).\]
We will show this without reverting to explicit computations in the proof of Theorem~\ref{thmabc}.

\subsection{Proof of basic properties} \label{sectionsp}

\begin{lemma} \label{claim2}
For $A,B\in E$ we have
  \[ -A-B=-(A+B).\]
\end{lemma}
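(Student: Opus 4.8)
The plan is to exploit the symmetry of the addition formulas under the reflection $P \mapsto -P$, where $-(x_P,y_P) = (x_P,-y_P)$ and $-O = O$. Writing the claim as $(-A)+(-B) = -(A+B)$, I would first dispose of the cases handled by clauses (a) and (b) of the definition, and then treat the generic clause (c) by a direct substitution.

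For the degenerate cases: if $A = O$ then $-A = O$, so $(-A)+(-B) = -B$, while $A+B = B$ gives $-(A+B) = -B$; the case $B=O$ is symmetric. If $A = -B$ (equivalently $B = -A$), then $A+B = O$ by clause (b), so $-(A+B) = O$; on the other hand $-B = A$ is the inverse of $-A$, so $(-A)+(-B) = (-A)+A = O$, again by clause (b). Thus in every case covered by (a) and (b) the identity holds.

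For the generic case I assume $A,B \neq O$ and $A \neq -B$. The first thing to check is that the pair $(-A,-B)$ also falls under clause (c): since $-A = -(-B)$ holds precisely when $x_A = x_B$ and $y_A = -y_B$, i.e.\ precisely when $A = -B$, the hypothesis $A \neq -B$ guarantees $-A \neq -(-B)$, so formula (\ref{addpq}) applies to $(-A,-B)$ as well. The key computation is then that the slope reverses sign: in the secant case $x_A \neq x_B$ one finds $\alpha(-A,-B) = \frac{-y_A+y_B}{x_A-x_B} = -\alpha(A,B)$, and in the tangent case $x_A = x_B$ one finds $\alpha(-A,-B) = \frac{3x_A^2+a}{2(-y_B)} = -\alpha(A,B)$; in both subcases $\alpha(-A,-B) = -\alpha(A,B)$.

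Substituting $\alpha(-A,-B) = -\alpha(A,B)$ into (\ref{addpq}) and using $(-\alpha)^2 = \alpha^2$, the $x$-coordinate of $(-A)+(-B)$ equals $\alpha(A,B)^2 - x_A - x_B = x_{AB}$, so the two points share the same first coordinate. For the second coordinate, the formula gives the $y$-value of $(-A)+(-B)$ as $y_A - \alpha(A,B)(x_A - x_{AB})$, which is exactly $-y_{AB}$. Hence $(-A)+(-B) = (x_{AB},-y_{AB}) = -(A+B)$, completing the generic case. I expect the only subtle point to be the bookkeeping needed to confirm that $(-A,-B)$ lands in the same branch of the definition as $(A,B)$ --- namely the equivalence $-A = -(-B) \Leftrightarrow A = -B$ --- after which the sign reversal of the slope makes the remainder a one-line substitution.
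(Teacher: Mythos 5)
Your proposal is correct and takes essentially the same approach as the paper: the paper's proof likewise dismisses the cases $A=O$, $B=O$, $A=-B$ as trivial and then appeals to an ``easy calculation using Equation (\ref{addpq})'', which is exactly the sign-reversal computation $\alpha(-A,-B)=-\alpha(A,B)$ (in both the secant and tangent branches) that you carry out explicitly. Your branch-checking remark that $(-A,-B)$ falls under clause (c) precisely when $(A,B)$ does is the right bookkeeping, and the rest matches the paper's intended calculation.
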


\begin{proof}
The cases $A=O, B=O$ and $A=-B$ are trivial.
In the other cases the lemma follows from an easy calculation
using Equation (\ref{addpq}).
\end{proof}

\begin{lemma} \label{claimpmb}
Let $A,B\in E$.
If $A+B=A-B$ and $A \neq -A$, then $B=-B$.
\end{lemma}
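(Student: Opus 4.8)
The plan is to prove the contrapositive-flavored statement directly by analyzing what the hypothesis $A+B=A-B$ forces. Note that $-B=(x_B,-y_B)$, so the claim $B=-B$ amounts to showing $y_B=0$. First I would dispose of the degenerate cases. If $B=O$ then $-B=O=B$ and we are done; so assume $B\neq O$. Similarly if $A=O$, then $A+B=B$ and $A-B=-B$, so the hypothesis reads $B=-B$ directly, giving the conclusion. Thus I may assume $A,B\in E\setminus\{O\}$ and $A\neq -A$ (the latter by hypothesis, which means $y_A\neq 0$).

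\smallskip
The core idea is to add $-A$ to both sides of $A+B=A-B$ and use the already-established basic facts. Since we will eventually know associativity, but cannot yet invoke it in full generality here, I would instead argue more carefully using the geometric/algebraic structure available. The cleanest route: from $A+B=A-B$, apply Lemma~\ref{claim2} and the involution $-$. Observe that $-B=A-B-A$ wants to be manipulated, but to avoid circular use of associativity I prefer to reason as follows. Set $C:=A+B=A-B$. Then $C$ is a single point, and both $B$ and $-B$ are ``the second summand'' producing the same sum $C$ from the same first summand $A$. Geometrically this says the line through $A$ and $B$ and the line through $A$ and $-B$ meet $E$ in points that reflect to the same point $C$; I would turn this into the algebraic statement that $B$ and $-B$ yield identical addition output with $A$.

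\smallskip
Concretely, I would split on whether $x_A=x_B$ or not and compare the two applications of Equation~(\ref{addpq}). Write $A+B=(x_{AB},y_{AB})$ and $A+(-B)=(x_{A(-B)},y_{A(-B)})$. Since $-B=(x_B,-y_B)$, the slope used for $A+(-B)$ is $\alpha(A,-B)=\tmfrac{y_A+y_B}{x_A-x_B}$ when $x_A\neq x_B$, versus $\alpha(A,B)=\tmfrac{y_A-y_B}{x_A-x_B}$. The equality $x_{AB}=x_{A(-B)}$ forces $\alpha(A,B)^2=\alpha(A,-B)^2$, i.e.\ $(y_A-y_B)^2=(y_A+y_B)^2$, which gives $4y_Ay_B=0$. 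Since $y_A\neq 0$, this yields $y_B=0$, hence $B=-B$, as desired. In the case $x_A=x_B$, the facts recorded before Section~\ref{section31} force $A=B$ or $A=-B$; I would check each subcase against the hypothesis $A\neq -A$ and $A+B=A-B$ and extract $y_B=0$ there too.

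\smallskip
The main obstacle I anticipate is the case analysis when $x_A=x_B$, where the two slope formulas are not directly comparable and one must instead use the structural fact (2) that $x_A=x_B$ implies $A=\pm B$, together with the hypothesis $A\neq -A$, to pin down the configuration; one must take care that the addition is genuinely given by a formula (rather than falling into case (b) of the definition) so that the comparison is legitimate. Apart from that subcase bookkeeping, the computation reduces to the clean identity $4y_Ay_B=0$, so the argument is short once the cases are organized correctly.
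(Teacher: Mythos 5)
Your proof is correct and takes essentially the same route as the paper's: after disposing of the degenerate cases, you compare the $x$-coordinates of $A+B$ and $A+(-B)$ via Equation~(\ref{addpq}) to get $4y_Ay_B=0$, and conclude $y_B=0$ since $y_A\neq 0$ and $\Char(K)>3$. The only cosmetic difference is in the $x_A=x_B$ subcases ($A=\pm B$), where the paper derives $B=-B$ directly from uniqueness of the inverse element, whereas your hypotheses there force $A+A=O$ and hence a contradiction with $A\neq -A$ (so those subcases are vacuous rather than yielding $y_B=0$ as you phrase it); either disposition is fine.
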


\begin{proof}
The cases $A=O$ respectively $B=O$ are trivial. If $A=\pm B$, then $B=-B$ follows easily from the uniqueness
of the inverse element.
So assume that $A,B \ne O, A \ne \pm B$.
Using Equation (\ref{addpq}) we get
\[
\left(\smfrac{y_B-y_A}{x_B-x_A}\right)^2-x_A-x_B\,\,=\,\,\left(\smfrac{-y_B-y_A}{x_B-x_A}\right)^2-x_A-x_B. \]
This simplifies to  $ -2y_Ay_B= 2y_Ay_B$.
 Since $A \neq -A$ it follows that $y_A \neq 0$.
 We get $y_B=0$ since $\Char(K)>3$, hence $B=-B$.
\end{proof}

\begin{lemma}[Uniqueness of the neutral element]\label{claimnull}
Let $A,B\in E$. If $A+B=A$, then $B=O$.
\end{lemma}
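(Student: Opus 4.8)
The plan is to argue purely from the explicit formula (\ref{addpq}) together with the nonsingularity hypothesis $4a^3+27b^2\ne 0$, by a short case analysis; notably no associativity is needed. First I would dispose of the case $A=O$: then $A+B=A$ reads $O+B=O$, and since $O$ is the neutral element $O+B=B$, so $B=O$ as claimed. From now on I assume $A\ne O$ and suppose, toward a contradiction, that $B\ne O$ as well.

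Next I would reduce to the case where the sum is computed by Formula (\ref{addpq}). If $(x_A,y_A)=(x_B,-y_B)$ then part (b) of the definition gives $A+B=O$, contradicting $A+B=A\ne O$; hence $A+B=(x_{AB},y_{AB})$ is given by (\ref{addpq}). The hypothesis $A+B=A$ means $x_{AB}=x_A$ and $y_{AB}=y_A$. The first of these, substituted into $y_{AB}=-y_A+\alpha(A,B)(x_A-x_{AB})$, kills the last term and leaves $y_{AB}=-y_A$; comparing with $y_{AB}=y_A$ and using $\Char(K)>3$ yields $y_A=0$. This step is uniform in the two cases of $\alpha(A,B)$, so I need not yet know whether $x_A=x_B$.

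Now I would split on whether $x_A=x_B$. If $x_A=x_B$, then $B=A$ or $B=-A$ (as $A,B\ne O$); the case $B=-A$ is the excluded part (b), while $B=A$ together with $y_A=0$ gives $(x_A,y_A)=(x_A,-y_A)=(x_B,-y_B)$, again part (b). So in fact $x_A\ne x_B$ and $\alpha(A,B)=\tmfrac{y_A-y_B}{x_A-x_B}=\tmfrac{-y_B}{x_A-x_B}$. The remaining condition $x_{AB}=x_A$ becomes $\alpha(A,B)^2=2x_A+x_B$, i.e.\ $y_B^2=(2x_A+x_B)(x_A-x_B)^2$. Into this I would feed the curve relation $y_B^2=x_B^3+ax_B+b$ and, since $y_A=0$, the relation $x_A^3+ax_A+b=0$; the cubic terms cancel and the identity collapses to $(3x_A^2+a)(x_B-x_A)=0$, whence $3x_A^2+a=0$.

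The hard part is the final step, which is exactly where nonsingularity enters. The relations $x_A^3+ax_A+b=0$ and $3x_A^2+a=0$ say that $x_A$ is a common root of $x^3+ax+b$ and of its derivative $3x^2+a$, i.e.\ a repeated root of $x^3+ax+b$. But the discriminant of $x^3+ax+b$ equals $-(4a^3+27b^2)$, so the hypothesis $4a^3+27b^2\ne 0$ says precisely that $x^3+ax+b$ has no repeated root. This contradiction rules out $B\ne O$ and finishes the proof. I expect the only delicate bookkeeping to be the cancellation in the previous paragraph and the verification that the degenerate sub-case $x_A=x_B$ genuinely falls under part (b) rather than under (\ref{addpq}).
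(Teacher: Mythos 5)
Your proof is correct, and after the shared opening move it takes a genuinely different, self-contained route. Both you and the paper reduce to Formula (\ref{addpq}) and extract $y_A=0$ from the $y$-coordinate equation (the term $\alpha(A,B)(x_A-x_{AB})$ dies because $x_{AB}=x_A$). The paper then proves $y_B=0$ as well: via Lemma~\ref{claim2} it writes $A+B=A=-A=-A-B=A-B$ and cites Lemma~\ref{claimpmb} to get $B=-B$; with the slope now zero, the $x$-coordinate equation gives $x_B=-2x_A$, and since the roots of $X^3+aX+b$ sum to zero the third root is again $x_A$ --- a double root, contradicting $4a^3+27b^2\neq 0$. You never establish $y_B=0$; instead you eliminate $y_B$ using the curve equation, collapse the $x$-coordinate equation to $(3x_A^2+a)(x_B-x_A)=0$, and conclude that $x_A$ is a common root of $X^3+aX+b$ and its derivative, i.e.\ a repeated root --- arriving at the same discriminant contradiction. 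Your route buys two things. First, it needs no earlier lemmas, whereas the paper's proof depends on Lemmas~\ref{claim2} and~\ref{claimpmb}. Second, it sidesteps a delicate point in the paper's own argument: Lemma~\ref{claimpmb} carries the hypothesis $A\neq -A$, yet the paper invokes it precisely after having deduced $A=-A$ (and with $y_A=0$ the relation $-2y_Ay_B=2y_Ay_B$ underlying that lemma is vacuous), so the citation as written does not apply; your elimination of $y_B$ is exactly how one would repair that step. Your algebra also checks out: $(2x_A+x_B)(x_A-x_B)^2=2x_A^3-3x_A^2x_B+x_B^3$, and subtracting $x_B^3+ax_B+b$ and substituting $b=-x_A^3-ax_A$ leaves $-(3x_A^2+a)(x_B-x_A)$, as you claim.
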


\begin{proof}
The cases $A=O$ and $A=-B$ are trivial.
Now assume that $A \neq O, A \neq -B$. Assume that $B\ne O$.
Write   $(x_C,y_C):=A+B=A=(x_A,y_A)$.
It follows from Formula  (\ref{addpq})   that
\[ y_A=y_C=-y_A+\alpha(P,Q) \underbrace{(x_A-x_C)}_{=0}=-y_A\]
i.e.\ $y_A=0$, therefore $A=-A$. It follows that
\[ A+B=A=-A=-A-B=A-B.\]
According to Lemma~\ref{claimpmb} this means that $B=-B$, i.e.\ $y_B=0$.
In particular $A \neq B$, because otherwise we would get $B=A=A+B=A+A=A-A=O$.
According to Formula  (\ref{addpq}) we get
\[ x_A=x_C=\left(\smfrac{y_B-y_A}{x_B-x_A}\right)^2-x_A-x_B=-x_A-x_B \]
since $y_A=y_B=0$.
Therefore $x_A$ and $x_B=-x_A-x_A$ are zeros of the polynomial $P:=X^3+aX+b$.
It follows that $x_0=-x_A-x_B=x_A$ is the third zero since the second highest coefficient of $P$ is zero.
In particular $x_A$ is a zero of degree 2. This leads to a contradiction,
since we assumed that the discriminant $4a^3+27b^2$ of the polynomial $X^3+aX+b$ is non--zero,
i.e.\ the polynomial has distinct zeros.
\end{proof}

\begin{lemma} \label{claimaaa}
Let $A\in E$. If $A \neq -A$ and $A+A \neq -A$,
then
$(A+A)-A=A$.
\end{lemma}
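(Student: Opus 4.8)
The plan is to reduce the statement to the single generic addition formula of Equation~(\ref{addpq}) and then verify it by a short direct calculation, of the kind that does not require the computer algebra used in Section~\ref{section31}. Write $A=(x_A,y_A)$; since $A\neq -A$ we have $y_A\neq 0$. First I would record $A+A=(x',y')$ using the tangent-slope formula, so that
\[ \alpha:=\tmfrac{3x_A^2+a}{2y_A},\qquad x'=\alpha^2-2x_A,\qquad y'=\alpha(x_A-x')-y_A, \]
and then compute $(A+A)-A$ as $B+(-A)$ with $B:=A+A=(x',y')$ and $-A=(x_A,-y_A)$.

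Before applying any formula I would check that $B+(-A)$ genuinely falls under case (c) with $x_B\neq x_{-A}$. The point $B$ is not $O$, because $A+A=O$ would force $y_A=0$, i.e.\ $A=-A$. It is not the case that $B=A$: otherwise $A+A=A$ and Lemma~\ref{claimnull} would give $A=O$, contradicting $A\neq -A$. Finally $x_B=x_{-A}=x_A$ is impossible: by the second of the two facts opening this section it would force $B=A$ or $B=-A$, and $B=-A$ is excluded by the hypothesis $A+A\neq -A$. Hence the secant formula applies with slope $\beta=\tmfrac{y'+y_A}{x'-x_A}$.

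The heart of the argument is the observation that this slope simplifies. Substituting $y'=\alpha(x_A-x')-y_A$ and $x'-x_A=\alpha^2-3x_A$ gives
\[ \beta=\tmfrac{\alpha(3x_A-\alpha^2)}{\alpha^2-3x_A}=-\alpha, \]
where the cancellation is legitimate precisely because $x'\neq x_A$. Feeding $\beta=-\alpha$ into Equation~(\ref{addpq}) then collapses everything: the $x$-coordinate of $B+(-A)$ is $\beta^2-x'-x_A=\alpha^2-(\alpha^2-2x_A)-x_A=x_A$, and the $y$-coordinate is $-y'+\beta(x'-x_A)=-y'-\alpha(x'-x_A)=y_A$, using the formula for $y'$ once more. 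Thus $(A+A)-A=(x_A,y_A)=A$.

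The only real obstacle here is bookkeeping rather than computation: one must establish, from the two hypotheses $A\neq -A$ and $A+A\neq -A$ together with Lemma~\ref{claimnull} and the opening facts, that none of the degenerate branches (a), (b), or the tangent branch of (c) intervenes, so that the clean secant computation is the one that actually governs $(A+A)-A$. Once that case analysis is in place, the identity $\beta=-\alpha$ renders the remaining algebra entirely routine.
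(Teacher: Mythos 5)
Your proof is correct and takes essentially the same route as the paper, whose proof of this lemma simply asserts that ``the general case follows from an easy computation using Equation~(\ref{addpq})'': you carry out exactly that computation, with the case analysis (via Lemma~\ref{claimnull} and the hypothesis $A+A\neq -A$) justifying the use of the secant branch. The observation that the secant slope equals $-\alpha$ is a clean way of making the paper's omitted ``easy calculation'' explicit and hand-checkable.
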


\begin{proof}
The cases $A=O$ and $A+A=O$ are trivial. The general case follows from an easy computation
using Equation (\ref{addpq}).
\end{proof}

\begin{lemma} \label{claim5}
Let $A,B\in E$. If $A+B=-A$, then $B=-A-A$.
\end{lemma}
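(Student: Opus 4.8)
The plan is to reduce to a single generic case and then isolate a clean slope identity. First I would dispose of the degenerate cases. If $A=O$ then $A+B=B=-A=O$ and $-A-A=O$, so the claim holds, and the case $B=O$ is similar. If $A=-A$ (equivalently $y_A=0$), then $-A=A$ and the hypothesis reads $A+B=A$, so $B=O$ by Lemma~\ref{claimnull}, while $A+A=O$ forces $-A-A=O=B$. If $A=-B$ then $A+B=O=-A$ forces $A=O$, already covered; and if $A=B$ the hypothesis gives $A+A=-A$, whence $-A-A=-(A+A)=-(-A)=A=B$ by Lemma~\ref{claim2}. It thus remains to treat the generic case $A,B\ne O$, $y_A\ne 0$ and $A\ne\pm B$; by fact~(2) this gives $x_A\ne x_B$, so the slope $\alpha:=\tmfrac{y_A-y_B}{x_A-x_B}$ governs $A+B$ through Equation~(\ref{addpq}).

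In this case the first coordinate of $A+B=-A=(x_A,-y_A)$ gives $\alpha^2-x_A-x_B=x_A$, hence the single relation $\alpha^2=2x_A+x_B$, while the second coordinate is then automatically satisfied. My target is to show that $B=-(A+A)$, which by Lemma~\ref{claim2} is the same as $-A-A$. Let $\lambda:=\tmfrac{3x_A^2+a}{2y_A}$ be the tangent slope at $A$, so that $A+A=(x_{AA},y_{AA})$ with $x_{AA}=\lambda^2-2x_A$. The key observation is that it suffices to prove $\alpha=\lambda$: then the line through $A$ and $B$ is exactly the tangent to $E$ at $A$, and I would conclude $x_{AA}=\lambda^2-2x_A=\alpha^2-2x_A=x_B$ from the displayed relation, while the point $B$, lying on this tangent line, automatically satisfies $y_B=-y_{AA}$, giving $B=(x_B,y_B)=(x_{AA},-y_{AA})=-(A+A)$.

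The crux is therefore the identity $\alpha=\lambda$, i.e.\ $2y_A(y_A-y_B)=(3x_A^2+a)(x_A-x_B)$; this is the main, though short, obstacle. I would verify it directly: expanding $\alpha^2=2x_A+x_B$ and substituting the curve relations $y_A^2=x_A^3+ax_A+b$ and $y_B^2=x_B^3+ax_B+b$ turns the relation into $2y_Ay_B=a(x_A+x_B)+2b-x_A^3+3x_A^2x_B$, after which both sides of the desired identity collapse to $3x_A^3-3x_A^2x_B+ax_A-ax_B$. Since $y_A\ne 0$ and $\Char(K)>3$, this legitimately yields $\alpha=\lambda$ and completes the generic case, so that no resort to the polynomial-ideal computation used in Lemma~\ref{claimabc} is needed here.
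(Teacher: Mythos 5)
Your proof is correct, and in the generic case it takes a genuinely different route from the paper's. The paper eliminates $y_B$ by squaring: from $x_A=\alpha^2-x_A-x_B$ it derives $2y_Ay_B=y_A^2+ax_B+b-2x_A^3+3x_A^2x_B$, squares this to obtain a cubic in $x_B$ that factors as $\bigl(x_B-(\lambda^2-2x_A)\bigr)(x_B-x_A)^2=0$, and so learns only that $x_B=x_{A+A}$, i.e.\ $B=\pm(A+A)$. Undoing that sign ambiguity costs the paper several more steps: it must rule out $A+A=-A$, invoke Lemma~\ref{claimaaa} to confirm that $-A-A$ really does solve $A+B=-A$, and invoke Lemma~\ref{claimpmb} to show that if $B=A+A$ were also a solution then $B=-B$, so the two candidates coincide. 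You never square: you keep the relation $2y_Ay_B=a(x_A+x_B)+2b-x_A^3+3x_A^2x_B$, which is linear in $y_B$, and use it to prove the slope identity $2y_A(y_A-y_B)=(3x_A^2+a)(x_A-x_B)$, i.e.\ $\alpha=\lambda$; that pins down both coordinates of $B$ simultaneously and gives $B=-(A+A)$ outright, using only Lemmas~\ref{claim2} and~\ref{claimnull} among the earlier results. Your algebra checks out: $\alpha^2=2x_A+x_B$ together with the curve equations reduces both $2y_A(y_A-y_B)$ and $(3x_A^2+a)(x_A-x_B)$ to $3x_A^3-3x_A^2x_B+ax_A-ax_B$, and dividing by $2y_A(x_A-x_B)$ is legitimate since $y_A\ne 0$, $x_A\ne x_B$ and $\Char(K)>3$. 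What each approach buys: yours is shorter and self-contained, because preserving the sign information in $y_B$ makes the concluding case analysis unnecessary; the paper's elimination-by-squaring is the more mechanical move (kill the unknown $y_B$ first) but pays for the lost information afterwards with the auxiliary lemmas. The degenerate cases are handled essentially identically in the two proofs.
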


\begin{proof}
The cases $A=O$, $B=O$, $A=B, A=-B$ are trivial.
If $A=-A$, then $-A+B=-A$. Using Lemma~\ref{claimnull} it follows that $B=O$. Hence $B=O=A-A=-A-A$.
Now assume that $A\neq \pm B$ and $A \neq -A$, $A,B \ne O$.
From $-A=A+B$ it follows that
 \[ x_A\,\,=\,\,\left(\smfrac{y_A-y_B}{x_A-x_B}\right)^2-x_A-x_B   \]
 which is equivalent to $2y_Ay_B=y_A^2+ax_B+b-2x_A^3+3x_A^2x_B$. Squaring both sides we get
 \[4x_B^3y_A^2-x_B^2(3x_A^2+a)^2+x_B(2a^2x_A+6x_A^5-12bx_A^2)-(y_A^2-b)^2+4ax_A^4+8bx_A^3=0\]
      which in turn is equivalent to
      \[\left(x_B-\left(\left(\smfrac{3x_A^2+a}{2y_A}\right)^2-2x_A\right)\right)
                (x_B-x_A)^2=0.\]
      Since we excluded the case $x_A=x_B$ we get
      \[x_B=\left(\smfrac{3x_A^2+a}{2y_A}\right)^2-2x_A\]
      i.e.\  $B=A+A$ or $B=-(A+A)=-A-A$.
      If $A+A=-A$, then $B=\pm(A+A)=\pm A$. Hence  $A+A \neq -A$.
      By Lemma~\ref{claimaaa} it follows that $B=-A-A$ is a solution for the equation $A+B=-A$.
      If $B=A+A$ is also a solution, then
      \[A+B\,\,=\,\,A+(A+A)\,\,=\,\,-A\,\,=\,\,A-(A+A)\,\,=\,\,A-B.\] Since $A \neq -A$ it follows from Lemma~\ref{claimpmb}, that $B=-B$. Therefore we obtain that $B=-B=-A-A$.
\end{proof}

\begin{lemma}[Cancelation rule] \label{claim3}
Let $A,B,\widetilde{B} \in E$.
If $A+B=A+\widetilde B$, then $B=\widetilde B$.
\end{lemma}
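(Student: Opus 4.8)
The plan is to prove the cancelation rule by reducing it to the uniqueness of the neutral element (Lemma~\ref{claimnull}), which I would like to invoke after transporting the equation $A+B=A+\widetilde B$ to the additive identity. The natural strategy is: add $-A$ to both sides and show that $(-A)+(A+B)=B$, so that the hypothesis forces $B=\widetilde B$. Concretely, if I can establish the single auxiliary identity
\[ (-A)+(A+B)\,\,=\,\,B \qquad\text{for all }A,B\in E, \]
then from $A+B=A+\widetilde B$ I immediately get $(-A)+(A+B)=(-A)+(A+\widetilde B)$, hence $B=\widetilde B$, and the lemma follows. So the entire content of the proof is this cancelation identity, which is essentially the statement that $-A$ is a two-sided inverse that can be cancelled.

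The key steps I would carry out in order are as follows. First I would dispose of the degenerate cases $A=O$ and $B=O$, which are immediate from clause (a) of the definition, and the case $A=-B$: here $A+B=O$ by clause (b), and one checks directly that $(-A)+O=-A=B$. Second, for the remaining generic situation $A,B\ne O$ and $A\ne -B$, I would set $C:=A+B$ and argue that $(-A)+C=B$. Rather than expand the addition formulas a third time, I would prefer to recognize this as an instance of the already-proved lemmas: writing the claim as $(-A)+(A+B)=B$ and using Lemma~\ref{claim2} (which gives $-A-B=-(A+B)$) together with commutativity, I can rephrase everything in terms of sums I already control. In particular, the identity $(-A)+(A+B)=B$ is the ``inverse'' counterpart of statements like Lemma~\ref{claimaaa} and Lemma~\ref{claim5}, and I expect it to follow from the generic associativity Lemma~\ref{claimabc} (applied to the triple $-A,A,B$) combined with the trivial fact that $(-A)+A=O$, once the side hypotheses of that lemma are checked.

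The main obstacle will be precisely the verification that the hypotheses of the generic associativity lemma are met, or else the careful handling of the exceptional configurations where those hypotheses fail. The associativity results proved so far each carry a list of genericity conditions (such as $A\ne\pm B$, $A+B\ne\pm C$, and so on), and when I apply them to the triple $(-A,A,B)$ some of these will genuinely break down---for instance $-A=\pm A$ occurs exactly when $A=-A$, and $(-A)+A=O$ puts me into the point-at-infinity clause rather than Formula~(\ref{addpq}). I therefore expect the proof to split into several cases according to whether $A=-A$, whether $B=-B$, and whether certain intermediate sums coincide, and in each such case I would fall back either on Lemma~\ref{claimnull}, Lemma~\ref{claimpmb}, or Lemma~\ref{claim5} to close the argument. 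The computational formulas themselves are not the difficulty here; the bookkeeping of degenerate cases is, and structuring that case analysis cleanly---so that every branch terminates in one of the already-established lemmas---is where the real work lies.
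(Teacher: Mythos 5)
Your reduction is, in itself, sound: the identity $(-A)+(A+B)=B$ for all $A,B$ (which, after swapping $A$ and $B$ and using commutativity, is exactly the paper's Lemma~\ref{claim4}, $(A+B)-B=A$) does imply the cancelation rule, and the paper's proof of that identity nowhere uses Lemma~\ref{claim3}, so no circularity would arise from proving it first. The gap is in how you propose to prove the identity. You want to get its generic case from Lemma~\ref{claimabc} applied to the triple $(-A,A,B)$, treating failures of its hypotheses as degenerate cases to be handled separately. But the failure is not degenerate, it is total: the hypothesis of Lemma~\ref{claimabc} for a triple $(P,Q,R)$ includes $P\ne \pm Q$, which for $P=-A$, $Q=A$ reads ``$-A\ne A$ \emph{and} $-A\ne -A$'', and the second condition is false for \emph{every} $A$ (your remark that ``$-A=\pm A$ occurs exactly when $A=-A$'' misreads this). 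Equivalently, $(-A)+A=O$ means the inner sum is computed by clause (a)/(b) of the definition, whereas Lemma~\ref{claimabc} is precisely the statement about triples in which every addition is given by Formula~(\ref{addpq}); its polynomial verification says nothing about your triple. The same structural obstruction kills every re-bracketing: the triple $(A,B,-B)$ violates the hypothesis $Q\ne\pm R$ because $B\ne B$ is false, and $(-A,B,A)$ only relates $(B-A)+A$ to $(-A)+(B+A)$, i.e.\ one unknown quantity to another.

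Nor can the fallback lemmas you list close the generic case: Lemma~\ref{claimaaa} gives only the special case $B=A$, Lemma~\ref{claim5} only the case $A+B=-A$, and Lemmas~\ref{claimnull} and~\ref{claimpmb} address different statements. This is why the paper, when it proves $(A+B)-B=A$ (Lemma~\ref{claim4}), resorts to a fresh explicit computation with Equation~(\ref{addpq}) for the generic case $A,B\ne O$, $A\ne\pm B$, $A+B\ne -B$ --- exactly the computation you are trying to avoid. So as written your proof does not close. You must either carry out that computation, or argue directly in coordinates as the paper's own proof of Lemma~\ref{claim3} does: write $A+B=A+\widetilde B=(x_C,y_C)$, use $A+B\ne\pm A$ to get $x_C\ne x_A$, deduce $\alpha(A,B)=\alpha(A,\widetilde B)$ from the $y$-equation, hence $x_B=\widetilde x_B$ from the $x$-equation, so $B=\pm\widetilde B$, and finally remove the sign ambiguity using Lemma~\ref{claimpmb} when $A\ne -A$ and the explicit slope formula when $A=-A$.
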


\begin{proof}
If $A=O$, then immediately $B=\widetilde B$.
The cases $B=O$ and $A+B=O$ follow immediately from the uniqueness of the neutral element
(Lemma~\ref{claimnull}) and the uniqueness of the inverse element.
If $A+B=A+\widetilde B=-A$, then using~\ref{claim5} we see that $B=-A-A$ and $\widetilde B=-A-A$.

We therefore can assume that $A,B,\ti{B} \ne O$
 and $A+B=A+\widetilde B \ne O, A+B\ne -A$. Writing $A+B=A+\widetilde B=:(x_C,y_C)$
we get
      \[
      \begin{array}{rcccl}
       x_C&=&\alpha(A,B)^2-x_A-x_B&=&\alpha(A,\ti{B})^2-x_A-\widetilde x_B \\
       y_C&=&-y_A+\alpha(A,B)(x_A-x_C)&=&-y_A+\alpha(A,\ti{B})(x_A-x_C).\\
      \end {array}
      \]
      From $A+B \neq \pm A$ it follows that $x_A \neq x_C$, from the second equation we get $\a(A,B)=\a(A,\ti{B})$.
      Using the first equation we get  $x_B=\widetilde x_B$, i.e.
      $B=-\widetilde B$, or $B=\widetilde B$.
      We consider the following two cases:
      \bn
      \item If $A=-A$, then $B, \widetilde B \neq -A=A$, hence
            \[ \smfrac{y_B-y_A}{x_B-x_A}\,\,=\,\,\alpha(A,B)\,\,=\,\,\alpha(A,\ti{B})\,\,=\,\,
               \smfrac{\widetilde y_B-y_A}{\widetilde x_B-x_A}. \]
            Since $x_B=\widetilde x_B$ we get $y_B=\widetilde y_B$, therefore
            $B=\widetilde B$.
      \item If $A \neq -A$, then assume that $B=-\widetilde B$. It follows that
            $A+B=A+\widetilde B=A-B$.
            By Lemma~\ref{claimpmb} $B=-B$, since $A \neq -A$. Therefore $B=\widetilde B$.\qedhere
      \en
\end{proof}

\begin{lemma} \label{claim4}
For any $A,B \in E$ we have
\[ (A+B)-B=A. \]
\end{lemma}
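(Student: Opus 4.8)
The goal is to show $(A+B)-B=A$, i.e.\ that adding $-B$ undoes the addition of $B$. The plan is to reduce this to the Cancelation rule (Lemma~\ref{claim3}), which has already been established. The key observation is that if I can produce a point $X$ with the property that $B+X=B+A$ follows formally, then Lemma~\ref{claim3} forces $X$ to equal the point I want, or more directly, I want to massage the desired identity into an instance of cancelation.

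First I would dispose of the degenerate cases directly from the definition: if $A=O$, then $(O+B)-B=B-B=O=A$; if $B=O$, the claim reads $(A+O)-O=A$, which is immediate; and if $A=-B$ (so $A+B=O$), then $(A+B)-B=O-B=-B=A$. These all follow at once from parts (a) and (b) of the Definition and the formula $-(-B)=B$. So I may assume $A,B\ne O$ and $A\ne -B$, hence $A+B\ne O$.

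Now set $X:=(A+B)-B$, which is a well-defined point of $E$. The strategy is to compute $B+X$ and show it equals $B+A$, then invoke cancelation. Using commutativity (which was noted at the start of Section~2) together with the associativity cases already proved in Section~\ref{section31}, I would like to rearrange
\[
B+X=B+\bigl((A+B)-B\bigr)=\bigl(B+(A+B)\bigr)-B=\bigl((A+B)+B\bigr)-B.
\]
The cleaner route, however, is to recognize $(A+B)-B=A$ as the statement that $-B$ is the inverse we expect: since $-(-B)=B$, I can instead prove the equivalent identity by showing that $B$ and the candidate $A$ both solve $A+B=(A+B)$ under the cancelation framework. Concretely, I would argue that the point $Y:=A+B$ satisfies $Y+(-B)=A$ if and only if $A+B=Y$, and the latter holds by definition; the passage between these uses Lemma~\ref{claim2} (so that $-Y-(-B)=-(Y-B)$) and the uniqueness results.

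The main obstacle is that a fully clean reduction wants associativity in the specific configuration $(A+B)+(-B)$, and the associativity lemmas of Section~\ref{section31} carry several genericity hypotheses ($A\ne\pm B$, various sums $\ne\pm$ the third point, etc.) that need not hold here. I therefore expect the real work to be an unavoidable case analysis: distinguish whether $A=-A$ or $A\ne -A$, whether $A=\pm B$, and whether any of the intermediate sums are $\pm$ each other, handling each branch either by a direct appeal to Lemma~\ref{claimpmb}, Lemma~\ref{claimnull}, Lemma~\ref{claim5}, or the cancelation rule Lemma~\ref{claim3}, and falling back to an explicit computation with Equation~(\ref{addpq}) only in the generic branch. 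The cancelation rule is the natural workhorse: once I establish $B+\bigl((A+B)-B\bigr)=A+B=B+A$ in each admissible configuration, Lemma~\ref{claim3} yields $(A+B)-B=A$ immediately.
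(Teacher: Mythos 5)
There is a genuine gap, and it is structural rather than a matter of missing detail. Your main strategy---set $X:=(A+B)-B$, show $B+X=B+A$, and cancel by Lemma~\ref{claim3}---is circular. Writing $B+X=B+\bigl((A+B)-B\bigr)$ and using commutativity, the identity you need is $\bigl((A+B)+B\bigr)-B \,(=B+X)\,=A+B$, which is exactly Lemma~\ref{claim4} applied to the pair $(A+B,\,B)$; applying the lemma to different arguments does not break the circle, since there is no induction structure here. The only other route to $B+\bigl((A+B)-B\bigr)=A+B$ is an associativity statement for a triple containing both $B$ and $-B$, and this is precisely the configuration that the generic Lemmas~\ref{claimabc}, \ref{claimaac}, \ref{claimaaaa} exclude: their hypotheses include $B\ne\pm C$, which fails identically when $C=-B$. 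The same circularity affects your ``cleaner route'': the asserted equivalence ``$Y-B=A$ if and only if $A+B=Y$'' is exactly Corollary~\ref{cor4}, which the paper derives \emph{from} Lemma~\ref{claim4} together with Lemma~\ref{claim3}, so it cannot be used to prove Lemma~\ref{claim4}.

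Your fallback position---case analysis plus an explicit computation in the generic branch---is indeed what the paper does, but you do not carry it out, and the parts you leave open are the entire content of the lemma. Concretely, the paper's proof is: the trivial cases $A=O$, $B=O$, $A=-B$ (which you handle correctly); the case $A=B$, which is settled by Lemma~\ref{claimaaa} (a lemma you never invoke, and the only tool that covers this case); the case $A+B=-B$ with $A\ne -B$, where Lemma~\ref{claim5} gives $A=-B-B$ and hence $(A+B)-B=-B-B=A$; and finally the generic case $A,B\ne O$, $A\ne\pm B$, $A+B\ne -B$, which is resolved by an explicit computation with Equation~(\ref{addpq}). That computation is unavoidable here: Lemma~\ref{claim4} is exactly the associativity-type statement in the degenerate configuration that the computational lemmas of Section~\ref{section31} cannot reach, so it must be verified by formulas on its own, not reduced to cancelation.
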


\begin{proof}
The cases $A=O, B=O$ respectively $A=-B$ are trivial.
The case $A=B$ follows from Lemma~\ref{claimaaa}.
If $A+B=-B$ and $A\ne -B$, then we obtain from  Lemma~\ref{claim5} that $A=-B-B$, hence $(A+B)-B=-B-B=A$.

Now assume that $A,B\ne O, A\ne \pm B, A+B\ne -B$.
This case follows from an explicit computation using Equation (\ref{addpq}).
\end{proof}

\begin{corollary} \label{cor4}
Let $A,B,C\in E$. If $ A+B=C$, then $A=C-B$.
\end{corollary}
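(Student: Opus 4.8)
The plan is to derive this immediately from Lemma~\ref{claim4}, which has already established that $(A+B)-B=A$ for all $A,B\in E$. Since the hypothesis gives us $A+B=C$, I would simply substitute this equality into the left-hand side of Lemma~\ref{claim4}: replacing $A+B$ by $C$ yields
\[
 C-B\,\,=\,\,(A+B)-B\,\,=\,\,A,
\]
where the second equality is exactly the content of Lemma~\ref{claim4}. Rearranging gives $A=C-B$, which is the desired conclusion.

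The only point that deserves a moment's care is that the expression $C-B$ is interpreted as $C+(-B)$, and likewise $(A+B)-B$ means $(A+B)+(-B)$; with this understood, the substitution $C=A+B$ is a genuine identity of elements of $E$, so adding $-B$ to both sides (equivalently, applying the well-defined map $X\mapsto X-B$) is legitimate. No case analysis is needed here, because Lemma~\ref{claim4} has already absorbed all the special cases (the point at infinity, $A=\pm B$, and the various coincidences requiring the tangent formula) into its own proof.

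Consequently there is no real obstacle to overcome: this statement is a one-line corollary whose entire substance has been discharged by the preceding lemma. The value of isolating it as Corollary~\ref{cor4} is purely organizational, namely to record the ``solvability'' of the equation $A+B=C$ for the summand $A$ in a form that can be cited cleanly in the general associativity argument of Section~\ref{sectionproof}.
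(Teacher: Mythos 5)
Your proof is correct, and it is in fact slightly more economical than the paper's. The paper does not substitute directly; instead it uses Lemma~\ref{claim4} to obtain the identity $A+B=A+(C-A)$ (apply the lemma to the pair $(C,-A)$ to get $(C-A)+A=C=A+B$) and then finishes with the cancellation rule, Lemma~\ref{claim3}, which strips off the common summand and yields $B=C-A$; by the symmetry of the hypothesis $A+B=B+A=C$ under swapping $A$ and $B$, this is the assertion of the corollary. Your argument bypasses Lemma~\ref{claim3} entirely: since $C$ and $A+B$ denote the same point of $E$, Lemma~\ref{claim4} gives $C-B=(A+B)-B=A$ in one step, and, as you correctly note, all degenerate cases (the point $O$, $A=\pm B$, the tangent cases) were already absorbed into the unconditional statement of Lemma~\ref{claim4}. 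The paper's detour buys nothing extra here; if anything, your version makes visible that Corollary~\ref{cor4} depends only on Lemma~\ref{claim4} and not on the cancellation rule.
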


\begin{proof}
From Lemma~\ref{claim4} we get $A+B=A+(C-A)$, the corollary now follows from Lemma~\ref{claim3}.
\end{proof}

\subsection{Completion of the proof} \label{sectionproof}

\begin{lemma} \label{lemmavabc}
Let $A,B,C\in E$.
Assume that
\bn[font=\normalfont]
\item $(A+B) \neq C$ and $A \neq (B+C)$, or
\item $A=B$, or $B=C$, or $A=C$, or
\item $O \in \{A,B,C,A+B,B+C,(A+B)+C,A+(B+C)\}$,
\en
then
\[ (A+B)+C\,\,=\,\,A+(B+C). \]
\end{lemma}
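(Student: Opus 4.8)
The plan is to dispose of the three disjuncts in the order (3), (2), (1), so that when treating a later case I may freely invoke the cases already settled. Throughout I would exploit the symmetry $A\leftrightarrow C$: by commutativity, $(C+B)+A=C+(B+A)$ is literally the identity $A+(B+C)=(A+B)+C$ rewritten, and one checks that each of the hypothesis sets (1), (2), (3) is invariant under swapping $A$ and $C$ (again using commutativity, e.g.\ $(C+B)+A=A+(B+C)$ and $C+(B+A)=(A+B)+C$). Hence any configuration for $(A,B,C)$ can be replaced by its mirror $(C,B,A)$, which roughly halves the bookkeeping.

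First I would treat disjunct (3). The cases $A=O$, $B=O$, $C=O$ are immediate from $O+X=X+O=X$ (and $C=O$ is the mirror of $A=O$). If $A+B=O$, then $B=-A$, so the left side is $O+C=C$, while the right side is $A+((-A)+C)$, which equals $C$ by commutativity and Lemma~\ref{claim4}. If $B+C=O$ (the mirror of $A+B=O$), the left side is $(A+B)-B=A$ by Lemma~\ref{claim4} and the right side is $A+O=A$. If $(A+B)+C=O$, then $A+B=-C$, and Lemmas~\ref{claim4} and~\ref{claim2} give $B+C=-A$, so the right side is $A+(-A)=O$ as well; the case $A+(B+C)=O$ is its mirror. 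Thus all of (3) reduces to a few one-line manipulations with the basic lemmas, and henceforth I may assume that $A,B,C$ and every sum occurring in the statement is $\neq O$.

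Next I would treat disjunct (2). The case $A=C$ is trivial: both sides equal $A+(A+B)$ by commutativity, and this also disposes of $A=B=C$. For the subcase $A=B$ I may therefore assume $A\neq C$, and I must show $(A+A)+C=A+(A+C)$, which is exactly Lemma~\ref{claimaac} once its hypotheses hold. Each way they can fail is already covered: $A=O$ or $C=O$, or $A=-A$ (whence $A+B=O$), or $A=-C$ (whence $B+C=O$), or $A+A=-C$ (whence $(A+B)+C=O$), or $A+C=\pm A$ (which by Lemmas~\ref{claimnull} and~\ref{claim5} forces $C=O$ or $A+A=-C$) all land in disjunct (3). The one genuinely new failure is $A+A=C$, and then $(A+A)+C=(A+A)+(A+A)$ while $A+(A+C)=A+(A+(A+A))$, so the identity is precisely Lemma~\ref{claimaaaa}; its own degenerate subcases again collapse into (3) by Lemmas~\ref{claimnull}, \ref{claim5} and Corollary~\ref{cor4}. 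Finally $B=C$ is the mirror of $A=B$.

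Finally, for disjunct (1) I may assume that neither (2) nor (3) holds, for otherwise the identity is already proved. Then $A,B,C$ are pairwise distinct and every relevant sum is nonzero; in particular $A=-B$, $B=-C$, $A+B=-C$, or $B+C=-A$ would each place us in (3), so $A\neq\pm B$, $B\neq\pm C$, $A+B\neq\pm C$ and $B+C\neq\pm A$, the last two using the given $A+B\neq C$ and $A\neq B+C$. These are exactly the hypotheses of Lemma~\ref{claimabc}, which finishes the proof. I expect the main obstacle to be the case analysis inside disjunct (2): verifying exhaustively that every failure of the hypotheses of Lemmas~\ref{claimaac} and~\ref{claimaaaa} is absorbed by disjunct (3) requires careful use of the cancellation and inverse lemmas, and it is easy to overlook one of the sub-cases.
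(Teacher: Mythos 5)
Your proof is correct and follows essentially the same route as the paper's: dispose of case (3) via Lemmas~\ref{claim4} and~\ref{claim2}, reduce case (2) to Lemmas~\ref{claimaac} and~\ref{claimaaaa} (splitting on whether $C=A+A$), and conclude case (1) from Lemma~\ref{claimabc}. Your explicit use of the $A\leftrightarrow C$ symmetry and your detailed verification that every failed hypothesis of Lemmas~\ref{claimaac} and~\ref{claimaaaa} falls back into case (3) merely spell out what the paper leaves implicit.
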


\begin{proof}
The cases $A=O$, $B=O$, $C=O$ and $A=C$ are trivial.
The cases $A=-B$ and $C=-B$ follow immediately from Lemma~\ref{claim4}.
If $A+B=-C$, then by Lemma~\ref{claim4}
      \[ (A+B)+C=O=A-A=A+(B+(-B-A))=A+(B+C). \]
The case $B+C=-A$ works the same way. We thus established part (3) of the lemma.

We can therefore assume that $A,B,C \ne O$, $A \neq C, B\ne -A,-C, A+B \not=-C$ and $B+C\not=-A$.

If $A=B$, then we have to show that $(A+A)+C=A+(A+C)$.
This follows from Lemmas~\ref{claimaac} ($C \neq A+A$) and~\ref{claimaaaa} ($C=A+A$).
The case $B=C$ again works the same way.
This shows part (2) of the lemma.

The remaining cases of part (1) now follow immediately from Lemma~\ref{claimabc}.
\end{proof}

\begin{theorem} \label{thmabc}
Let $A,B,C \in E(K)$. Then
\[ (A+B)+C\,\,=\,\,A+(B+C) \]
\end{theorem}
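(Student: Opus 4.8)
The plan is to reduce the fully general associativity statement to the special cases already established, chiefly Lemma~\ref{lemmavabc}, by eliminating the one configuration that lemma does not cover. Lemma~\ref{lemmavabc} handles everything \emph{except} the situation in which none of its hypotheses hold; that is, when $A,B,C\neq O$, all three points are distinct, none of the listed sums equals $O$, and yet $A+B=C$ or $A=B+C$. So the core of the proof is to show associativity precisely in the leftover case where $A+B=C$ (the case $A=B+C$ being symmetric, or reducible to it by Corollary~\ref{cor4}).

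First I would set up the remaining case carefully: assume $A,B,C\in E\setminus\{O\}$ are pairwise distinct with $A+B=C$, and that $O$ does not occur among the relevant sums, so that Lemma~\ref{lemmavabc} genuinely fails to apply. The goal is $(A+B)+C=A+(B+C)$, which under $A+B=C$ reads $C+C=A+(B+C)$. The idea is to avoid a fresh brute-force computation by exploiting the basic arithmetic lemmas of Section~\ref{sectionsp}, especially the cancelation rule (Lemma~\ref{claim3}), the inversion rule $-(A+B)=-A-B$ (Lemma~\ref{claim2}), and the subtraction identity $(A+B)-B=A$ (Lemma~\ref{claim4}). Concretely, from $A+B=C$ one gets via Corollary~\ref{cor4} the relations $A=C-B$ and $B=C-A$, and the plan is to substitute these so that the desired identity becomes an instance of an already-proven associativity case, then cancel a common summand using Lemma~\ref{claim3}.

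The key maneuver is to introduce a suitable auxiliary point and rewrite both sides as sums in which the triple satisfies the hypotheses of an \emph{earlier} associativity lemma (one of Lemmas~\ref{claimabc}, \ref{claimaac}, \ref{claimaaaa}, or the full Lemma~\ref{lemmavabc} applied to a \emph{different}, admissible triple). For instance, writing $C=A+B$ and using $(A+B)-A=B$ and $-A-B=-(A+B)$ repeatedly, one can transform $A+(B+C)$ into an expression of the form $A+(B+(A+B))$, which is exactly the identity flagged in the remark after Lemma~\ref{claimaaaa} ($(A+B)+(A+B)=A+(B+(A+B))$); that identity is to be obtained here \emph{without} new computation, by playing the generic associativity cases against the cancelation rule. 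The mechanism is: apply an admissible instance of associativity to get one grouping, apply Lemma~\ref{claim4} or Lemma~\ref{claim5} to simplify an inner sum, and then cancel with Lemma~\ref{claim3} to land on the target.

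The main obstacle I anticipate is bookkeeping the degenerate sub-possibilities: even after restricting to $A+B=C$ with the points distinct, one must verify that the auxiliary triple chosen for the substitution really does satisfy the non-degeneracy hypotheses ($\neq\pm$, sums $\neq O$, etc.) of whichever earlier lemma is invoked, and if it does not, one falls back on the tailored two-point lemmas (Lemmas~\ref{claimpmb}, \ref{claimnull}, \ref{claimaaa}, \ref{claim5}) to dispatch that boundary. In other words, the difficulty is not a hard computation but a careful case analysis ensuring that the algebraic rewriting via Lemmas~\ref{claim2}, \ref{claim3}, \ref{claim4}, \ref{claim5} and Corollary~\ref{cor4} never secretly uses an inadmissible instance. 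Once the single residual case $A+B=C$ is closed, the theorem follows: every triple $A,B,C$ either meets a hypothesis of Lemma~\ref{lemmavabc} or falls into this residual case, so associativity holds in full generality, and combined with the already-noted commutativity, existence of the neutral element $O$, and inverses $-A=(x_A,-y_A)$, this establishes that $(E,+)$ is an abelian group.
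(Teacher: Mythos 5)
Your proposal follows essentially the same route as the paper: reduce via Lemma~\ref{lemmavabc} to the single residual case $A+B=C$, then obtain $(A+B)+(A+B)=A+(B+(A+B))$ with no new computation by combining an admissible (two-equal-entries) instance of associativity from part (2) of Lemma~\ref{lemmavabc} with the subtraction identities (Lemma~\ref{claim4}, Corollary~\ref{cor4}, Lemma~\ref{claim5}) and the cancelation rule (Lemma~\ref{claim3}). The paper's concrete realization of your ``auxiliary point'' is $-A$ --- one subtracts $A$ from both groupings and cancels --- with the boundary case $(A+B)+(A+B)=-A$ treated separately, exactly the kind of degenerate sub-case your plan anticipates.
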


\begin{proof}
By
Lemma~\ref{lemmavabc}.
we only have to prove the theorem for $A,B,C$ with $A+B=C$ or $B+C=A$.
Clearly it is enough to consider only the case $A+B=C$.
We therefore have to show that
\[(A+B)+(A+B)=A+(B+(A+B)).\]
By Lemma~\ref{lemmavabc} we can assume that $A,B,C,A+B,B+C,(A+B)+C,A+(B+C) \neq  O$ and that 
$A,B,C$ are pairwise different.

If $(A+B)+(A+B)=-A$, then $A+B=(-B-A)-A$ by Corollary~\ref{cor4}.
Furthermore $(-B-A)-A=-B+(-A-A)$ by the second part of Lemma~\ref{lemmavabc}, hence
       $A+B=-B+(-A-A)$. We get
      \begin{eqnarray}
      A+(B+(A+B))&\hspace*{-0.2cm}=&\hspace*{-0.2cm}A+(B+(-B+(-A-A)))=A+(-A-A)= \nonumber \\
                 &\hspace*{-0.2cm}=&\hspace*{-0.2cm}-A=(A+B)+(A+B). \nonumber
      \end{eqnarray}
If $(A+B)+(A+B)\neq -A$, then $((A+B)+(A+B))-A=(A+B)+((A+B)-A)$ by
the second part of Lemma~\ref{lemmavabc}. Hence
      \begin{eqnarray}
      ((A+B)+(A+B))-A&\hspace*{-0.2cm}=&\hspace*{-0.2cm}(A+B)+((A+B)-A)=(A+B)+B= \nonumber \\
                     &\hspace*{-0.2cm}=&\hspace*{-0.2cm}(A+(B+(A+B)))-A. \nonumber
      \end{eqnarray}
      From Lemma~\ref{claim3} it follows, that $(A+B)+(A+B)=A+(B+(A+B))$.
\end{proof}

\end{document}